\title{A direct approach to the rational normal form}
\author{Klaus Bongartz}
\newtheorem{definition}{Definition}
\newtheorem{theorem}{Theorem}
\newtheorem{lemma}{Lemma}
\begin{document}
\maketitle
In courses on linear algebra the rational normal form of a matrix  is usually derived from the structure theorem on  finitely generated  modules over $k[X]$ and/or from  operations on the rows and
 columns of the characteristic matrix. Here we propose a more direct approach
 staying in the realm of vector spaces and working with modules only implicitely. This 
should be  easier to understand for beginners in mathematics and it gives for $n=2,3,4$ a fast way to find the rational normal form of a matrix.

Throughout this note $k$ is an arbitrary  field. The set of all $n\times n$-matrices with coefficients in $k$ is denoted by $k^{n\times n}$. All irreducible polynomials as well as the
greatest common divisor gcd or the least common multiple lcm of two non-zero polynomials are normed.

\begin{definition}A rational normal form $R \in  k^{n\times n}$ is a bloc-diagonal matrix \[
\left [
\begin{array}{cccc}
B(P_{1})&0& ...&0\\
0&B(P_{2})& ... & 0\\
...&...&...&...\\
0&0&...&B(P_{r})
\end{array}
\right ]
\] with companion-matrices $B(P_{1}),B(P_{2}),\ldots ,B(P_{r})$ on the main diagonal, such that all polynomials $P_{i}$ are normed with coefficients in $k$ and such that  $P_{i+1}$ divides 
 $P_i$ for all $i$ with  $1\leq i \leq r-1$. We write $R=R(P_{1},P_{2},\ldots ,P_{r})$ for such a matrix and call it an RNF.
\end{definition}

We want to show:

\begin{theorem}Each matrix $A \in k^{n\times n}$ is similar over $k$ to exactly one rational normal form $R=R(P_{1},P_{2},\ldots ,P_{r})$. Here $R$ as well as an invertible matrix
 $T \in k^{n\times n}$ with $T^{-1}AT=R$ are obtained from the coefficients of $A$ by finitely many rational operations.
\end{theorem}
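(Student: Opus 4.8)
I would make the usual identification explicit without ever naming modules. Let $A$ act on $V=k^n$, and for a vector $v$ call the monic polynomial of least degree with $P(A)v=0$ its \emph{order} $\mathrm{ord}(v)$. A single $v$ generates the $A$-invariant subspace $\langle v,Av,A^2v,\dots\rangle$, on which $A$ acts as the companion matrix $B(\mathrm{ord}(v))$. Thus an RNF decomposition is precisely a splitting of $V$ into cyclic $A$-invariant subspaces whose orders form a divisibility chain. The existence part I would prove by induction on $n$: produce a vector $v$ of maximal possible order, split off its cyclic subspace $U$ as an $A$-invariant direct summand, and apply the inductive hypothesis to the complement.

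\textbf{The two lemmas for existence.} First I would establish an \emph{lcm-lemma}: given $u,v$ there is a $w$ with $\mathrm{ord}(w)=\mathrm{lcm}(\mathrm{ord}(u),\mathrm{ord}(v))$ (immediate when the two orders are coprime, then reduced to that case by factoring). Applied to a basis, this yields a $v$ whose order is the least common multiple of the orders of all vectors, namely the minimal polynomial $\mu_A=:P_1$. The harder step is the \emph{splitting lemma}: the cyclic subspace $U=\langle A^i v\rangle$ is an $A$-invariant direct summand. For this I would use the lifting trick: given a vector $\bar w$ of order $Q$ in $V/U$, lift it arbitrarily to $w$; then $Q(A)w=g(A)v$ lies in $U$, and $P_1(A)w=0$ together with $\mathrm{ord}(v)=P_1$ forces $Q\mid g$, so that $w'=w-(g/Q)(A)v$ still lifts $\bar w$ but satisfies $Q(A)w'=0$, whence $\langle A^i w'\rangle\cap U=0$. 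Decomposing $V/U$ by induction and lifting each cyclic generator this way produces the complement $W$; a dimension count makes the sum direct, and since the top order $\mu_{V/U}$ of $V/U$ divides $P_1$, the divisibility chain $P_{i+1}\mid P_i$ is preserved.

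\textbf{Uniqueness.} Here I would exhibit the invariant factors as genuine invariants of the similarity class via the formula
\[
\dim_k\ker f(A)=\textstyle\sum_{i}\deg\gcd(f,P_i),
\]
valid for every polynomial $f$ (on a single block $k[X]/(P_i)$ the kernel of $f(A)$ has dimension $\deg\gcd(f,P_i)$). The left-hand side depends only on the similarity class, so two RNFs similar to $A$ yield the same values for all $f$. Taking $f=p^j$ for an irreducible $p$ turns the right-hand side into $\deg p\cdot\sum_i\min(j,v_p(P_i))$; from the successive differences in $j$ one reads off, for each $p$, how many $P_i$ are divisible by $p^j$. This determines the full system of prime-power divisors, and the condition $P_{i+1}\mid P_i$ reassembles them into a unique list $P_1,\dots,P_r$. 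Hence the RNF is unique.

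\textbf{Rationality and the main obstacle.} Finally I would check that no step leaves $k$: the minimal polynomial and the orders of vectors come from detecting the first linear dependence among $v,Av,A^2v,\dots$, i.e.\ from Gaussian elimination; gcd and lcm of polynomials come from the Euclidean algorithm; the corrected lift $w'$ and the induced action on $V/U$ come from solving linear systems; and the transformation matrix $T$ is assembled column by column from the constructed cyclic vectors $v,Av,\dots$ and those of the complement. All of this is finitely many field operations, and the recursion terminates since $\dim W<n$. I expect the main obstacle to be the splitting lemma: making the complement genuinely $A$-invariant while simultaneously guaranteeing the divisibility chain and keeping every choice inside $k$.
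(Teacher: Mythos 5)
Your existence argument is in substance the paper's own: you construct a vector of maximal order (the paper's Lemma 6), split off the cyclic subspace $U$ it generates, decompose the quotient by induction, and repair an arbitrary lift $w$ of a cyclic generator by the correction $w'=w-(g/Q)(A)v$. This last computation is exactly the paper's third step, where $v_i$ is replaced by $w_i=v_i-S_i(A_2)v_1$; the paper merely keeps $V/U$ implicit, as the lower right block $B_4$ of a block triangular matrix, and performs the induction on $B_4$. Your uniqueness argument, by contrast, is genuinely different. The paper inducts on the invariant factors directly: if $P_j=Q_j$ for all $j<i$, equality of the ranks of $P_i(R)$ and $P_i(R')$ forces $P_i(B(Q_i))=0$, hence $Q_i\mid P_i$, and symmetry finishes. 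You instead read off the multiset of elementary divisors from the similarity invariants $\dim_k\ker p(A)^j$; this is a correct alternative and even gives slightly more (each prime power multiplicity is individually an invariant of the similarity class), at the cost of invoking unique factorization --- which is harmless there, since uniqueness is a statement to be proved, not an object to be computed.

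The one genuine gap is in your lcm-lemma, and it touches the theorem's claim that $R$ and $T$ are found by finitely many \emph{rational} operations. You reduce to the coprime case ``by factoring''. If that means factoring $\mathrm{ord}(u)$ and $\mathrm{ord}(v)$ into irreducibles, the construction stops being rational: irreducible factorization is not a rational operation, and over an arbitrary field $k$ it is not effectively possible at all; your own rationality paragraph invokes only the Euclidean algorithm, which yields gcd's but not irreducible factors. Closing this gap is precisely the role of the paper's Lemma 5: with $P=\mathrm{ord}(u)$, $Q=\mathrm{ord}(v)$, $G=\gcd(P,Q)$, $P=G\tilde P$, $Q=G\tilde Q$ and $g=\deg G$, put $H=\gcd\bigl(G,\tilde Q^{\,g}\bigr)$ and $K=G/H$; then $\mathrm{ord}(H(A)u)=K\tilde P$ and $\mathrm{ord}(K(A)v)=H\tilde Q$ are coprime with product $\mathrm{lcm}(P,Q)$, so your coprime case applies to the vector $H(A)u+K(A)v$ --- and only gcd computations were used. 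With this one repair your proposal is a complete and fully rational proof.
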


The proof uses only multiplication of matrices and polynomials, Gaussian elimination and the euclidian algorithm to determine the greatest common divisor of two polynomials. 
The  number of operations used to find $R$ and $T$ grows  polynomially with $n$.

We need  two well-known facts that are easy to prove.
\begin{lemma} \begin{enumerate}
\item For similar matrices $A$ and $B$ the characteristic polynomials $\chi_{A}$ and $\chi_{B}$ as well as the minimal polynomials  $\mu_{A}$ and $\mu_{B}$ coincide.
\item For any $H \in k[X]$ the matrices $H(A)$ and $H(B)$ are conjugate under $T$ provided $A$ and $B$ are so.
\item Let $M$ be a bloc-diagonal matrix with diagonal blocs $M_{i}$. Then the rank of $M$ is the sum of the ranks of the $M_{i}$'s, the characteristic polynomial $\chi_{M}$
is the product of the $\chi_{M_{i}}$ and for any $H \in k[X]$ the matrix $H(M)$ is a bloc-diagonal
 matrix with the $H(M_{i})$ as diagonal blocs.
              \end{enumerate}
 
\end{lemma}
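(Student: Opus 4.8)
The plan is to derive all three assertions from two structural observations, after which each claim becomes a one-line verification. The first observation is that conjugation by a fixed invertible $T$, namely the map $X\mapsto T^{-1}XT$, is $k$-linear and multiplicative on $k^{n\times n}$; the second is that block-diagonal matrices with a fixed block pattern multiply block-wise, so they form a $k$-subalgebra on which taking the $i$-th diagonal block is an algebra homomorphism. I would prove part (ii) first, since part (i) then leans on it.

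For part (ii), write $B=T^{-1}AT$. The key identity is $(T^{-1}AT)^j=T^{-1}A^jT$ for every $j\ge 0$, which follows by an immediate induction in which the inner factors $TT^{-1}$ cancel. Since $X\mapsto T^{-1}XT$ is $k$-linear, applying it to $H(A)=\sum_j c_jA^j$ term by term gives $T^{-1}H(A)T=\sum_j c_j T^{-1}A^jT=\sum_j c_j B^j=H(B)$, so $H(A)$ and $H(B)$ are conjugate under the \emph{same} $T$. For part (i) the characteristic polynomials agree because $XI-B=T^{-1}(XI-A)T$ and the determinant is multiplicative, so $\chi_B=\det(T^{-1})\chi_A\det(T)=\chi_A$. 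For the minimal polynomials I would invoke part (ii): as $\mu_A(A)=0$, it yields $\mu_A(B)=T^{-1}\mu_A(A)T=0$, whence $\mu_B\mid\mu_A$; applying the same reasoning to $A=(T^{-1})^{-1}B(T^{-1})$ gives $\mu_A\mid\mu_B$, and since both are monic they coincide.

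Part (iii) I would settle using the block-wise multiplication noted above: $M^j$ is block-diagonal with diagonal blocks $M_i^j$, and summing against the coefficients of $H$ shows that $H(M)$ is block-diagonal with diagonal blocks $H(M_i)$. The identity $\chi_M=\prod_i\chi_{M_i}$ follows from the block-triangular expansion of the determinant applied to $XI-M$, whose diagonal blocks are $XI-M_i$, and the rank statement follows because the column space of $M$ is the direct sum of the column spaces of the individual blocks. None of this presents a genuine obstacle; the only points needing the slightest care are recording the induction for $(T^{-1}AT)^j$ and the determinant of a block-triangular matrix, both of which are entirely standard.
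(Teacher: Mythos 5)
Your proof is correct and complete: conjugation invariance via $T^{-1}H(A)T = H(T^{-1}AT)$, the determinant argument for $\chi$, the divisibility argument for $\mu$, and the block-wise computations for part (iii) are all sound. The paper itself offers no proof of this lemma (it is cited as a well-known fact that is ``easy to prove''), and your argument is precisely the standard one the author has in mind, so there is nothing to compare beyond noting that you have filled in the omitted details correctly.
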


\begin{lemma} For a companion-matrix $B(P)$ we have  $\chi_{B(P)}=\mu_{B(P)}=P$.
\end {lemma}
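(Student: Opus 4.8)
The plan is to treat the two equalities separately: the characteristic polynomial by a determinant computation and the minimal polynomial by exhibiting a cyclic vector. Write $P = X^{n} + c_{n-1}X^{n-1} + \cdots + c_1 X + c_0$ and abbreviate $B = B(P)$, recalling that in the usual convention $B$ acts on the canonical basis by $Be_i = e_{i+1}$ for $1 \le i \le n-1$ and $Be_n = -\sum_{j=0}^{n-1} c_j\, e_{j+1}$ (for the transposed convention the symmetric argument runs with $e_n$ in place of $e_1$).

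For $\chi_B = P$ I would expand $\det(X I_n - B)$ along the first column, which has only the two nonzero entries $X$ in position $(1,1)$ and $-1$ in position $(2,1)$. The $(1,1)$-cofactor is the analogous determinant for the companion matrix of $P' = X^{n-1} + c_{n-1}X^{n-2} + \cdots + c_1$, which by induction equals $P'$; the $(2,1)$-cofactor reduces, after one further expansion along its top row (whose only nonzero entry is $c_0$), to a triangular determinant, and the accumulated signs combine to contribute exactly $c_0$. Collecting terms gives $\det(X I_n - B) = X\,P'(X) + c_0 = P(X)$, with the base case $n=1$ immediate. This bookkeeping of signs in the cofactor expansion is the one place demanding care, and is the main, though entirely routine, obstacle.

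For $\mu_B = P$ I would use that $e_1$ is a cyclic vector. From $Be_i = e_{i+1}$ we get $B^{i-1}e_1 = e_i$, so $e_1, Be_1, \ldots, B^{n-1}e_1$ is just the canonical basis and is in particular linearly independent; hence no nonzero polynomial of degree $< n$ can annihilate $B$, for applying it to $e_1$ would produce a nontrivial linear relation among these vectors. Thus $\deg \mu_B \ge n$. On the other hand $B^n e_1 = B e_n = -\sum_{j=0}^{n-1} c_j\, e_{j+1} = -\sum_{j=0}^{n-1} c_j\, B^{j}e_1$, which rearranges to $P(B)e_1 = 0$. Since $P(B)$ commutes with $B$, we get $P(B)e_i = P(B)B^{i-1}e_1 = B^{i-1}P(B)e_1 = 0$ for every $i$, so $P(B)$ vanishes on a basis and therefore $P(B) = 0$; hence $\mu_B \mid P$. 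Combining $\mu_B \mid P$ with $\deg \mu_B \ge n = \deg P$ and monicity forces $\mu_B = P$, which finishes the proof. I note that this argument needs neither the theorem of Cayley and Hamilton nor the divisibility $\mu_B \mid \chi_B$, keeping everything elementary and in the spirit of the present note.
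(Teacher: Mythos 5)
Your proof is correct and complete. The paper itself offers no proof of this lemma at all---it is introduced as one of ``two well-known facts that are easy to prove'' and stated without argument---so there is no proof of record to compare against; what you give is the standard one: cofactor expansion for $\chi_{B(P)}=P$ and the cyclic vector $e_1$ for $\mu_{B(P)}=P$. Your closing remark deserves emphasis in the context of this particular paper: the author later \emph{deduces} the refined Cayley--Hamilton theorem ($\mu_A \mid \chi_A \mid \mu_A^n$) from this lemma together with the existence of the RNF, so any proof of the lemma must not invoke Cayley--Hamilton, and yours does not---$P(B)e_1=0$ is a direct computation, it propagates to the whole basis since $P(B)$ commutes with $B$, and the degree bound $\deg \mu_B \geq n$ comes from the linear independence of $e_1, Be_1, \ldots, B^{n-1}e_1$. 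Your sign bookkeeping in the characteristic polynomial computation also checks out: the cofactor sign $(-1)^{2+1}$, the entry $-1$, the sign $(-1)^{1+(n-1)}$ from expanding the minor along its top row, and the triangular determinant $(-1)^{n-2}$ multiply to $+1$, so the constant term contributes exactly $c_0$ and $\det(XI_n-B)=X\,P'(X)+c_0=P(X)$ as claimed.
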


For the RNF $R=R(P_{1},P_{2},\ldots ,P_{r})$ of $A$ it follows that $\chi_{A}=\chi_{R}=\prod_{i=1}^{r}P_{i}$ and  $\mu_{A}=\mu_{R}=P_{1}$.
 Thus we get the refined Cayley-Hamilton-theorem saying 
that $\mu_{A}$ divides $\chi_{A}$ that in turn divides $\mu_{A}^{n}$, so that the characteristic polynomial and the minimal polynomial have the same irreducible factors in $k[X]$ 
and in particular the same roots.

We begin the proof of theorem 1 with the uniqueness of the RNF.

\begin{lemma} Two similar rational normal forms are equal.
\end{lemma}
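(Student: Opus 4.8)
The plan is to recover the invariant factors $P_i$ from rank data attached to the similarity class, and then to invoke the divisibility chain $P_{i+1}\mid P_i$ to pin down the individual polynomials. Write $R=R(P_1,\ldots,P_r)$ and $S=R(Q_1,\ldots,Q_s)$ for two similar RNFs, where we may assume $\deg P_i\geq 1$ and $\deg Q_j\geq 1$ throughout, so that $\sum_i\deg P_i=\sum_j\deg Q_j=n$. As a warm-up, Lemma 1(i) together with the refined Cayley--Hamilton theorem already gives $P_1=\mu_R=\mu_S=Q_1$; the task is to extend this to all of the factors.

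The key tool is the rank of $H(R)$ for a polynomial $H\in k[X]$. By Lemma 1(ii) the matrices $H(R)$ and $H(S)$ are conjugate and hence have equal rank, while Lemma 1(iii) presents $H(R)$ as a block-diagonal matrix with blocks $H(B(P_i))$, so that $\mathrm{rank}\,H(R)=\sum_{i=1}^{r}\mathrm{rank}\,H(B(P_i))$. First I would establish the formula
\[
\mathrm{rank}\,H(B(P))=\deg P-\deg\gcd(H,P).
\]
This is the main computational step. Since $e_1$ is a cyclic vector for $B(P)$ and $\mu_{B(P)}=P$ by Lemma 2, the assignment $f\mapsto f(B(P))e_1$ identifies $k^{\deg P}$ with $k[X]/(P)$ in such a way that $H(B(P))$ becomes multiplication by $H$; its image is the image of the principal ideal generated by $g:=\gcd(H,P)$ in $k[X]/(P)$, a subspace of dimension $\deg P-\deg g$, which yields the displayed formula. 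Summing over the blocks, for every $H\in k[X]$,
\[
\sum_{i=1}^{r}\deg\gcd(H,P_i)=\sum_{j=1}^{s}\deg\gcd(H,Q_j).
\]

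Next I would specialize $H=\pi^{e}$ for an irreducible $\pi$ and an exponent $e\geq 0$. Writing $v_\pi(P)$ for the multiplicity of $\pi$ in $P$, we have $\deg\gcd(\pi^{e},P)=\deg(\pi)\cdot\min\bigl(e,v_\pi(P)\bigr)$, so the identity above reads $\sum_i\min(e,v_\pi(P_i))=\sum_j\min(e,v_\pi(Q_j))$. Forming the successive differences in $e$ converts the left-hand side into the count $\#\{i:v_\pi(P_i)\geq e\}$, and likewise on the right; hence for every irreducible $\pi$ the multiset of exponents $\{v_\pi(P_i)\}_i$ equals the multiset $\{v_\pi(Q_j)\}_j$.

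Finally the divisibility condition does the decisive work. From $P_{i+1}\mid P_i$ the sequence $v_\pi(P_1)\geq v_\pi(P_2)\geq\cdots$ is weakly decreasing for each fixed $\pi$, and similarly for the $Q_j$; a multiset together with the requirement that it be listed in decreasing order determines the sequence uniquely. Choosing $\pi$ to be an irreducible factor of the smallest factor $P_r$ (respectively $Q_s$), which divides every $P_i$ (respectively $Q_j$), shows $r=\max_\pi\#\{i:\pi\mid P_i\}=\max_\pi\#\{j:\pi\mid Q_j\}=s$, and then matching the ordered exponent sequences gives $v_\pi(P_i)=v_\pi(Q_i)$ for all $i$ and all $\pi$. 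As the $P_i$ and $Q_i$ are normed, this forces $P_i=Q_i$ for every $i$, so $R=S$. The main obstacle is the rank formula for $H(B(P))$; once it is in place, the passage from per-prime multiset data to the ordered invariant factors is exactly where the normalization $P_{i+1}\mid P_i$ becomes indispensable.
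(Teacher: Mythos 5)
Your argument is correct, but it takes a genuinely different route from the paper's. The paper proceeds by induction on the block index: $P_1=Q_1$ because both equal the minimal polynomial (Lemma 1), and, assuming $P_j=Q_j$ for $j<i$, it compares the ranks of the similar matrices $P_i(R)$ and $P_i(R')$: in $P_i(R)$ every block with index $j\geq i$ vanishes (since $P_j\mid P_i$ there and $\mu_{B(P_j)}=P_j$), the first $i-1$ blocks agree on both sides by the induction hypothesis, so equality of ranks forces $P_i(B(Q_i))=0$, i.e. $Q_i\mid P_i$, and symmetry finishes the step; $r=s$ comes for free from the degree count. No exact rank computation is ever needed there, only the fact that a rank deficit forces whole blocks to vanish. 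You instead prove the exact formula $\mathrm{rank}\,H(B(P))=\deg P-\deg\gcd(H,P)$ via the identification of $k^{\deg P}$ with $k[X]/(P)$, use it to show that for every irreducible $\pi$ the multiset of exponents $v_\pi(P_i)$ (the elementary divisors) is a similarity invariant, and only at the very end invoke the divisibility chain to re-sort these multisets into the ordered invariant-factor sequence, with a separate argument (a prime dividing $P_r$) to get $r=s$. Both proofs hinge on the same input from Lemma 1 --- $H(R)$ and $H(R')$ are conjugate, hence of equal rank, and ranks add over diagonal blocks --- but yours is non-inductive and proves strictly more: the rank identity for all $H$ and the invariance of elementary divisors are reusable structural facts. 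The price is the extra machinery of the $k[X]/(P)$ identification, which is precisely the explicit module-theoretic viewpoint the paper is deliberately keeping implicit, and a longer chain of reductions (prime powers, successive differences in $e$, matching sorted sequences). Your warm-up observation $P_1=Q_1$ is in fact redundant, since your general argument subsumes it.
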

 \begin{proof}Let $R=R(P_{1},\ldots ,P_{r})$ and $R'=R(Q_{1},Q_{2},\ldots,Q_{s})$ be RNF's with $T^{-1}RT=R'$ for  an invertible matrix  $T$
 with coefficients in $k$. We show $P_{i}=Q_{i}$  by induction and also $r=s$. 

Since $P_{1}$ is the minimal polynomial of $R$ and $Q_{1}$ is the minimal polynomial of $R'$  we get $P_{1}=Q_{1}$ from lemma 1. Suppose we know already $P_{j}=Q_{j}$ for all $j<i$. For 
 $i-1=r$ or $i-1=s$ we have  $r=s$ because $n$ equals the sum of the degrees of 
all $P_{j}$'s and $Q_{j}$'s. For $i-1<r$ we look at  $P_{i}(R)$ and $P_{i}(R')$ which have the same rank because they are similar. Only the first $i-1$ diagonal blocs of $P_{i}(R)$ are 
possibly non-zero and they coincide with the first $i-1$ blocs of $P_{i}(R')$ by the induction hypothesis. Since the ranks of $P_{i}(R)$ and $P_{i}(R')$ are equal we conclude that
 $P_{i}(B(Q_{i}))=0$. Therefore the minimal polynomial $Q_{i}$ of $B(Q_{i})$ divides $P_{i}$. By symmetry also $P_{i}$ divides $Q_{i}$ whence both are equal. 
  
 \end{proof}

The constructive proof of the existence is more difficult. We need the local minimal polynomial $\mu_{A,x}$ introduced in the next lemma. This  is just a normed generator of
 the annihilator of $x$ when we consider $k^{n}$ as a module
over $k[X]$ via $Xv=Av$.
\begin{lemma}
 For any $x\neq 0$ in $k^{n}$ there is a unique non-constant normed polynomial $\mu_{A,x}$ such that a polynomial $P$ satisfies $P(A)x=0$ iff it is a multiple of $\mu_{A,x}$.
\end{lemma}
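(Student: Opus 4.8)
The plan is to study the set $I=\{P\in k[X] : P(A)x=0\}$ directly and to extract $\mu_{A,x}$ as the monic polynomial of least degree in it. I first observe that $I$ is closed under sums and under multiplication by arbitrary polynomials, because $(GH)(A)x=G(A)\bigl(H(A)x\bigr)$; in other words $I$ is the annihilator ideal of $x$ in $k[X]$. The whole statement then amounts to saying that this ideal is generated by a single non-constant normed polynomial.

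First I would check that $I$ contains a nonzero polynomial. The $n+1$ vectors $x,Ax,A^{2}x,\ldots ,A^{n}x$ lie in the $n$-dimensional space $k^{n}$ and are therefore linearly dependent, so a nontrivial relation $\sum_{j=0}^{n}c_{j}A^{j}x=0$ produces a nonzero $P=\sum_{j}c_{j}X^{j}$ with $P(A)x=0$. (One could also simply note that $\mu_{A}\in I$.) Hence $I\neq\{0\}$, and I may choose $\mu_{A,x}$ to be a monic polynomial of least degree $d$ in $I$. Since $x\neq 0$, the constant polynomial $1$ does not annihilate $x$, so $1\notin I$ and $d\geq 1$; this gives the required non-constancy.

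The heart of the argument is the equivalence $P(A)x=0\Leftrightarrow \mu_{A,x}\mid P$. The direction $\Leftarrow$ is immediate: if $P=G\,\mu_{A,x}$ then $P(A)x=G(A)\bigl(\mu_{A,x}(A)x\bigr)=0$. For $\Rightarrow$ I would use division with remainder, writing $P=G\,\mu_{A,x}+S$ with $\deg S<d$. Then $S(A)x=P(A)x-G(A)\bigl(\mu_{A,x}(A)x\bigr)=0$, so $S\in I$; by the minimality of $d$ this forces $S=0$, i.e.\ $\mu_{A,x}\mid P$. Uniqueness is then automatic, since any two normed polynomials with this property divide each other and, being monic, must coincide.

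I do not expect a genuine obstacle here: the computation is the standard division argument in the principal ideal domain $k[X]$. The only points that need a word of care are the existence of a nonzero annihilating polynomial, which rests on the finite-dimensionality of $k^{n}$, and the non-constancy of $\mu_{A,x}$, which rests on $x\neq 0$; both are settled above. This construction is moreover effective, since $d$ and the coefficients of $\mu_{A,x}$ are found by Gaussian elimination applied to the vectors $x,Ax,A^{2}x,\ldots$, in accordance with the computational claim of Theorem 1.
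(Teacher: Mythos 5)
Your proof is correct and takes essentially the same approach as the paper: the paper defines $\mu_{A,x}$ via the smallest $m$ for which $A^{m}x$ is a linear combination of $x,Ax,\ldots ,A^{m-1}x$, which is exactly your monic polynomial of least degree in the annihilator ideal. The only difference is that you make explicit the division-with-remainder verification (and the ideal-theoretic framing the paper mentions but deliberately keeps implicit), whereas the paper's one-line proof leaves that check to the reader.
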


\begin{proof}Let $m$ be the smallest natural number such that $-A^{m}x$ is a linear combination $a_{0}x + a_{1}Ax + \ldots a_{m-1}A^{m-1}x$. Then $X^{m} + a_{m-1}X^{m-1} + \ldots  + a_{0}$ is the 
wanted polynomial.
 
\end{proof}

 It is obvious that $\mu_{A}$ is the least common multiple of the $\mu_{A,x}$, but in fact the minimal polynomial is the local minimal polynomial of some appropriate vector that 
can be constructed in 
finitely many steps. The following simple fact about polynomials will be useful.

 \begin{lemma}
 Let $P$ and $Q$ be normed polynomials with lcm $V$ and gcd $G$ different from $P$ and $Q$. Suppose $P=G\tilde{P}$ and $Q=G\tilde{Q}$. Then one can construct a decomposition $G=HK$
 such that each prime divisor of $H$ occurs in $\tilde{Q}$
whereas $K$ and $\tilde{Q}$ are coprime. Then $V$ is the product of the two coprime polynomials $H\tilde{Q}$ and $K\tilde{P}$.
\end{lemma}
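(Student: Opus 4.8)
The plan is to work throughout with irreducible (prime) factorizations, reducing everything to a clean statement about the exponents of the prime factors of $G$, and to make the splitting $G=HK$ constructive by repeated use of the euclidian algorithm. First I would record the two identities that make the rest mechanical. Since $G=\gcd(P,Q)$ and $P=G\tilde P$, $Q=G\tilde Q$, the multiplicativity $\gcd(G\tilde P,G\tilde Q)=G\cdot\gcd(\tilde P,\tilde Q)$ forces $\gcd(\tilde P,\tilde Q)=1$; and from $V\cdot G=\mathrm{lcm}(P,Q)\cdot\gcd(P,Q)=PQ=G^{2}\tilde P\tilde Q$ I read off $V=G\tilde P\tilde Q$. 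In particular the product $(H\tilde Q)(K\tilde P)=HK\,\tilde P\tilde Q$ equals $V$ as soon as $G=HK$, so the only real content is to split $G$ correctly and to check coprimality.

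Next I would construct the factorization. The goal is to collect into $H$ exactly those irreducible factors of $G$ whose prime divides $\tilde Q$, with their full multiplicity in $G$, and to leave the remaining factors in $K$. Staying within rational operations, I would do this by iterated gcd: set $K_{0}=G$ and, as long as $g_{i}:=\gcd(K_{i},\tilde Q)$ is non-constant, replace $K_{i}$ by $K_{i+1}=K_{i}/g_{i}$. Each step strips one layer of the $\tilde Q$-primes, so the exponent of such a prime in $K_{i}$ drops by at least one per step; hence after at most $\deg G$ steps the process stops with some $K$ satisfying $\gcd(K,\tilde Q)=1$, and I set $H=G/K$. (Equivalently, and even more directly, one may take $H=\gcd(G,\tilde Q^{\deg G})$ and $K=G/H$ in a single stroke, since the exponents of $G$ are bounded by $\deg G$.) By construction every irreducible factor of $H$ divides $\tilde Q$, while $K$ is coprime to $\tilde Q$.

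Finally I would verify coprimality of the two factors. Every irreducible factor of $H\tilde Q$ divides $\tilde Q$: those coming from $\tilde Q$ trivially, those coming from $H$ by the construction. Suppose such an irreducible $p$ also divided $K\tilde P$. If $p\mid K$ then $p\mid\gcd(K,\tilde Q)=1$, impossible; if $p\mid\tilde P$ then $p\mid\gcd(\tilde P,\tilde Q)=1$, again impossible. Hence $H\tilde Q$ and $K\tilde P$ share no irreducible factor, and together with $(H\tilde Q)(K\tilde P)=V$ this is exactly the assertion. I expect the only delicate point to be the constructive splitting of $G$: one must argue that the euclidian algorithm really separates the primes of $G$ according to divisibility by $\tilde Q$ and that the iteration terminates. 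The coprimality and the product identity are then pure bookkeeping on exponents.
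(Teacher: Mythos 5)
Your proposal is correct and takes essentially the same approach as the paper: the paper defines $H=\gcd(G,\tilde{Q}^{g})$ with $g=\deg G$, which is precisely your parenthetical one-stroke construction, and your iterated-gcd variant yields the same splitting of the primes of $G$ according to divisibility by $\tilde{Q}$. The identities $\gcd(\tilde{P},\tilde{Q})=1$ and $V=G\tilde{P}\tilde{Q}$ and the coprimality bookkeeping that you spell out are exactly what the paper compresses into ``the rest is easy to see.''
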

\begin{proof} Let $g$ be the degree of $G$ and let $H$ be the  gcd  of $G$ and $\tilde{Q}^{g}$. Then $G=HK$ is the wanted decomposition. Namely, let $R$ be an irreducible polynomial occurring  in $G$ with 
multiplicity $r>0$ and in $\tilde{Q}$ with non-zero multiplicity. Then $r\leq g$, whence $R^{r}$ occurs in $H$. The rest is easy to see.

\end{proof}
\begin{lemma} Let $A \in k^{n \times n}$ be given.
\begin{enumerate}
 \item For any non-zero $x$ and $y$ in $k^{n}$ we can construct a vector $z$ such that $\mu_{A,z}$ is the lcm $V$ of $\mu_{A,x}$ and $\mu_{A,y}$. 
\item One can construct a vector $x$ with $\mu_{A,x}=\mu_{A}$.
\end{enumerate}

\end{lemma}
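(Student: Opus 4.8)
The plan is to establish part (i) and then deduce part (ii) by feeding the construction of (i) into itself over a basis. Everything stays constructive because, given any non-zero vector, its local minimal polynomial is computed by the Gaussian elimination described in Lemma 4, and because gcd, lcm and the splitting of Lemma 5 are all effective.

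The heart of (i) is the \emph{coprime case}. I claim that if $P:=\mu_{A,x}$ and $Q:=\mu_{A,y}$ are coprime, then $z:=x+y$ already satisfies $\mu_{A,z}=PQ=V$. One divisibility is immediate: $(PQ)(A)z=Q(A)P(A)x+P(A)Q(A)y=0$, so $\mu_{A,z}$ divides $PQ$. For the converse suppose $R(A)z=0$, i.e. $R(A)x=-R(A)y$. Applying $Q(A)$ and using $Q(A)y=0$ gives $R(A)Q(A)x=0$, hence $P\mid RQ$ and, by coprimality, $P\mid R$; applying $P(A)$ gives symmetrically $Q\mid R$. Thus $PQ\mid R$, so in particular $PQ\mid\mu_{A,z}$, and as both polynomials are normed they agree.

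To remove the coprimality hypothesis I would first record a passage to divisors: if $\mu_{A,x}=P$ and $D\mid P$, then $x_{D}:=(P/D)(A)x$ has $\mu_{A,x_{D}}=D$. Indeed $D(A)x_{D}=P(A)x=0$ gives $\mu_{A,x_{D}}\mid D$, while $S(A)x_{D}=0$ means $(S\cdot P/D)(A)x=0$, so $P\mid S\cdot(P/D)$ and hence $D\mid S$; taking $S=\mu_{A,x_{D}}$ yields $D\mid\mu_{A,x_{D}}$. Now, for arbitrary $x,y$, put $P=\mu_{A,x}$, $Q=\mu_{A,y}$ and $G=\gcd(P,Q)$. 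If $G=P$ then $P\mid Q$, so $V=Q$ and $z=y$ works; the case $G=Q$ is symmetric. Otherwise Lemma 5 produces a factorisation $V=V_{1}V_{2}$ into the coprime factors $V_{1}=H\tilde{Q}$ and $V_{2}=K\tilde{P}$, which moreover satisfy $V_{1}\mid Q$ and $V_{2}\mid P$. By the passage to divisors, $x':=(P/V_{2})(A)x$ has $\mu_{A,x'}=V_{2}$ and $y':=(Q/V_{1})(A)y$ has $\mu_{A,y'}=V_{1}$; as $V_{1}$ and $V_{2}$ are coprime, the coprime case yields $\mu_{A,x'+y'}=V_{1}V_{2}=V$, so $z:=x'+y'$ is the wanted vector.

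For (ii) I would use that $\mu_{A}$ is the lcm of the local minimal polynomials of the standard basis vectors $e_{1},\ldots ,e_{n}$: each $\mu_{A,e_{i}}$ divides $\mu_{A}$, while the lcm $L$ annihilates every $e_{i}$, so $L(A)=0$ and $\mu_{A}\mid L$. Then I set $z_{1}=e_{1}$ and let $z_{i+1}$ be the vector produced by (i) applied to $z_{i}$ and $e_{i+1}$, so that inductively $\mu_{A,z_{i}}=\mathrm{lcm}(\mu_{A,e_{1}},\ldots ,\mu_{A,e_{i}})$; after $n$ steps $x:=z_{n}$ satisfies $\mu_{A,x}=\mu_{A}$. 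I expect the only real obstacle to lie inside (i): the coprime computation is routine, but arranging the general situation so that Lemma 5 becomes applicable — passing from $x,y$ to the polynomial images $x',y'$ realising the coprime factors — is the step that must be set up with care, in particular the verifications that $V_{1}\mid Q$, $V_{2}\mid P$ and that $(P/D)(A)x$ has local minimal polynomial exactly $D$.
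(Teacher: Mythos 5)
Your proposal is correct and takes essentially the same route as the paper: in part (i) the coprime case with $z=x+y$ is argued identically, and in the general case your vectors $x'=(P/V_{2})(A)x$ and $y'=(Q/V_{1})(A)y$ are precisely the paper's $H(A)x$ and $K(A)y$ coming from the decomposition $G=HK$ of Lemma 5, your \emph{passage to divisors} merely spelling out what the paper dismisses as clear. In part (ii) you run a fixed loop over all canonical basis vectors with the lcm as invariant, whereas the paper adaptively picks a basis vector $e_{i}$ outside $\ker P(A)$ and terminates by degree growth; this is only an organizational difference, as both arguments reduce to at most $n$ applications of part (i).
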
  \begin{proof}
 Let $G$ be  the greatest common divisor  of $P=\mu_{A,x}$ and $Q=\mu_{A,y}$. Then we have $P=G\tilde{P}$ and $Q=G\tilde{Q}$ and we can assume that $P\neq V \neq Q$.

If $G=1$ then $z=x+y$ has  $V=PQ$ as its local minimal polynomial.
 Namely, $V(A)$ annihilates $z$. Reversely, $L(A)z=0$ implies $L(A)x=-L(A)y$, whence $0=(PL)(A)x=-(PL)(A)y$. Thus $Q$ divides $PL$ and also $L$ because $P$ and $Q$ are coprime. By symmetry,
 $P$ divides $L$ too and so $PQ$ divides $L$.

For $G\neq 1$  we take  the decomposition $G=HK$ from the last lemma. Clearly, $\mu_{A,K(A)y}=H\tilde{Q}$ and $\mu_{A,H(A)x}=K\tilde{P}$. 
By the case treated before $V$ is the local minimal polynomial of  $z=H(A)x + K(A)y$.

The second assertion follows easily. One starts with an arbitrary vector $x\neq 0$ and computes $P=\mu_{A,x}$. If this differs from $\mu_{A}$ there is a canonical base vector $y=e_{i}$ 
outside of the kernel of $P(A)$. Using the first part one constructs a vector $z$ with $\mu_{A,z}$ of strictly larger degree. This procedure stops after at most $n$ steps with a wanted vector.

                                    \end{proof}

One should observe that the 'bad' vectors $x$ with $\mu_{A,x}\neq \mu_{A}$ belong to the union of the kernels of $S(A)$ where $S$ is one of the finitely many normed proper divisors of 
$\mu_{A}$. Thus over an infinite field like the real numbers 'most'  vectors $x$ satisfy $\mu_{A}=\mu_{A,x}$.

Now we prove the existence of the RNF $R$ and of a tranformation-matrix $T$ in three steps. Let $A =A_{0}\in k^{n\times n}$ be given. We will construct for $i=1,2,3$ three invertible matrices $T_{i}$ such that $A_{i}= T_{i}^{-1}A_{i-1}T_i$ is 
always a better 
'approximation' than $A_{i-1}$ to the wanted  RNF which is $A_{3}$ and $T=T_{1}T_{2}T_{3}$ is the transformation matrix.

In the first step we use lemma 6 to construct a vector $x$ with $P_{1}:=\mu_{A}=\mu_{A,x}$. In the following we denote by $p_{i}$ the degree of the polynomial $P_{i}$ occurring in the RNF to $A$.
We define the first matrix $T_{1}$ by taking as the first $p_{1}$ columns $x,Ax,\ldots ,A^{p_{1}-1}x$ and by completing this linearly independent set to a basis say by some appropriate
 canonical
 base vectors. The matrix $A_{1}=T_{1}^{-1}A_{0}T_1$ represents the original linear map given by $A$ with respect to the columns of $T_{1}$. Thus $A_{1}$ resp. $P_{1}(A_{1})$ are
 upper triangular bloc-matrices

\[\left [
\begin{array}{cc}
B(P_{1})&B_{2}\\
0&B_{4}
\end{array}
\right ]  resp. \left [
\begin{array}{cc}
P_{1}(B(P_{1}))&*\\
0&P_{1}(B_{4})
\end{array}
\right ].\]
We still have $P_{1}(A_{1})=0$, whence $P_{1}(B_{4})=0$. Thus the minimal polynomial $P_{2}$ of $B_{4}$ divides $P_{1}$. 

By induction there is an invertible matrix $T_{2}'$ with $n-p_1$ rows such that $T_{2}'^{-1}B_{4}T_{2}'$ is an RNF of the shape

\[\left [
\begin{array}{cccc}
B(P_{2})&0& ...&0\\
0&B(P_{3})& ... & 0\\
...&...&...&...\\
0&0&...&B(P_{r})
\end{array}
\right ].\]

Defining $T_{2}=\left [
\begin{array}{cc}
E_{p_1}&0\\
0&T_{2}'
\end{array}
\right ]$, where $E_{p_{1}}$ is the identity matrix with $p_{1}$ rows,   we obtain $A_{2}=T_{2}^{-1}A_{1}T_2$ of the shape \[\left [
\begin{array}{cccc}
B(P_{1})&C_{2}& ...&C_{r}\\
0&B(P_{2})& ... & 0\\
...&...&...&...\\
0&0&...&B(P_{r})
\end{array}
\right ].\] 

In the last step  the $C_{i}$'s in the first row will be made to $0$.
For each index $i\geq 2$ let  $v_{i}$ be the canonical base vector of $k^{n}$ with index $p_{1}+p_{2}+\ldots +p_{i-1}+1$ and define  $v_{1}=e_1$. Then $A_{2}^{j}v_{1}=e_{j+1}$ for $j < p_1$. 
Furthermore  $P_{i}(A_{2})v_{i}$ is the column of  $P_{i}(A_{2})$ with index $p_{1}+p_{2}+\ldots +p_{i-1}+1$. Only the first $p_1$ coefficients thereof might be non-zero,
whence $P_{i}(A_{2})v_{i}= H_{i}(A_{2})v_{1}$ for a uniquely determined polynomial $H_i$ of degree $p_{1}-1$ at most. Since $P_{2}$ and therefore all  $P_i$ divide $P_1$, 
there exist polynomials $R_i$ satisfying  $P_{1}= P_{i}R_{i}$. We infer $0=P_{1}(A_{2})v_{i}= (R_{i}P_{i})(A_{2})v_{i}=(R_{i}H_{i})(A_{2})v_{1}$, whence  
 $P_{1}=\mu_{A_{2},v_{1}}$ divides $R_{i}H_{i}$ and we obtain $H_{i}=P_{i}S_{i}$ for some $S_{i}$. Now consider $w_{i}= v_{i}- S_{i}(A_{2})v_1$ for $i=2,3,\ldots ,r$.
By construction we have $P_{i}(A_{2})w_{i}=0$. Moreover the matrix $T_3$  with columns
\[ v_{1},A_{2}v_{1}, \ldots, A_{2}^{p_{1}-1}v_{1},w_{2},A_{2}w_{2},\ldots ,A_{2}^{p_{2}-1}w_{2}, \ldots ,w_{r},A_{2}w_{r}, \ldots ,A_{2}^{p_{r}-1}w_{r}\]

is invertible being  upper triangular with $1$ as diagonal entries. $T_{3}^{-1}A_{2}T_{3}$ represents the linear map given by $A_{2}$ with respect to the columns of $T_{3}$ and so it is
 the wanted RNF.\vspace{0.5cm}

Note that for nilpotent $A$  the RNF  coincides with the Jordan normal form JNF. In that case step one and three of our proof are almost trivial and so we obtain in particular a short
proof for the  JNF in the nilpotent case to which the general case can be  reduced by looking at the generalized eigenspaces separately.

 For the convenience of the reader we provide another proof that we learned from Gabriel. This proof  does not proceed by induction and it gives a very clear picture
of the structure of nilpotent linear maps. 

\begin{theorem} Let $f$ be a nilpotent endomorphism of a finite dimensional vector space $V$. Then there is an ordered basis in which $f$ is represented by a nilpotent Jordan normal form.

\end{theorem}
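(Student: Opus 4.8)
The plan is to analyze the filtration of $V$ by the kernels $K_i = \ker f^i$ and to build a Jordan basis from the top down rather than by induction on $\dim V$. Since $f$ is nilpotent there is a smallest $m$ with $f^m = 0$, giving a chain $0 = K_0 \subseteq K_1 \subseteq \cdots \subseteq K_m = V$. The map $f$ carries $K_i$ into $K_{i-1}$, and I would first establish the key structural fact that the induced linear map $\bar f \colon K_i/K_{i-1} \to K_{i-1}/K_{i-2}$ is injective: if $x \in K_i$ and $f(x) \in K_{i-2}$, then $f^{i-1}(x) = f^{i-2}(fx) = 0$, so $x \in K_{i-1}$. Consequently the jump dimensions $d_i := \dim(K_i/K_{i-1})$ satisfy $d_m \le d_{m-1} \le \cdots \le d_1$.

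With this in hand I would construct the basis level by level, starting at the top. First choose vectors whose residues form a basis of $K_m/K_{m-1}$; each such vector $u$ generates a chain $u, fu, \ldots, f^{m-1}u$ of length $m$. Descending one step, the images under $f$ of these top vectors have, by the injectivity of $\bar f$, linearly independent residues in $K_{m-1}/K_{m-2}$; I extend them to a basis of that quotient by adjoining new vectors, each of which starts a chain of length $m-1$. Iterating downward, at each level $i$ the residues of the already-constructed chain vectors passing through level $i$ are independent in $K_i/K_{i-1}$, and I complete them to a basis of that quotient by new ``top'' vectors initiating chains of length $i$.

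The collection of all chain vectors is then the desired ordered basis. I would verify that it is a basis by checking that, for each $i$, the chain vectors lying in $K_i$ but not $K_{i-1}$ project to a basis of $K_i/K_{i-1}$; summing over the filtration yields a total of $\sum_i d_i = \dim V$ linearly independent vectors, so they span. In this basis $f$ sends each chain vector to its successor and kills the bottom of each chain, so its matrix is a direct sum of nilpotent Jordan blocks, one of size $\ell$ for every chain of length $\ell$.

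The main obstacle I expect is the bookkeeping in the simultaneous descent: one must argue carefully that at each stage the images of the previously chosen chain vectors remain independent modulo $K_{i-2}$, so that each extension to a basis of $K_i/K_{i-1}$ is well posed and no chain collides with another. All of this, however, follows formally from the single injectivity lemma for $\bar f$, which is exactly what makes the argument clean and avoids induction on the dimension; once that lemma is isolated, the remainder is a direct, if slightly tedious, verification that the constructed family is both spanning and linearly independent.
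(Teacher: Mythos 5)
Your proof is correct, but it takes a route dual to the paper's rather than the same one. You filter $V$ by the kernels $K_i=\ker f^i$ and build the chains from the top down: pick residues spanning $K_m/K_{m-1}$, push them down by $f$, and complete at each level, the key lemma being injectivity of the induced maps $K_i/K_{i-1}\to K_{i-1}/K_{i-2}$. The paper instead works entirely inside $K=\ker f$ with the decreasing chain $K\cap f^rV\subseteq\cdots\subseteq K\cap fV\subseteq K$, chooses compatible bases of these nested subspaces, and builds the chains from the bottom up by lifting each basis vector $x_{i,j}$ to some $y_{i,j}$ with $f^iy_{i,j}=x_{i,j}$ (such lifts exist by the very definition of $K\cap f^iV$, so no quotient spaces and no injectivity lemma are needed). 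The two filtrations carry the same numerical data, since $f^i$ induces an isomorphism $\ker f^{i+1}/\ker f^i\cong K\cap f^iV$, so the block sizes agree. What each buys: your top-down version needs no choice of preimages (each chain is determined by its top vector), but it requires the quotient-space formalism and gets spanning from independence plus the count $\sum_i\dim(K_i/K_{i-1})=\dim V$; the paper's bottom-up version stays with concrete vectors, proving independence by applying $f^r,f^{r-1},\ldots$ to a linear relation and spanning by an induction showing each $\ker f^i$ lies in the span, at the cost of choosing lifts. Both avoid induction on $\dim V$, and both read off the multiplicities directly (the number of blocks of size $i+1$ is $n_i$ in the paper's notation, and $d_{i+1}-d_{i+2}$ vanishes into the analogous statement $d_i-d_{i+1}$ in yours), which yields uniqueness of the nilpotent JNF as a byproduct.
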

\begin{proof}  Assume $f^{r+1}=0\neq f^r$. In the kernel $K$ of $f$ we look at the chain of subspaces 
\[K\cap f^{r}V \subseteq K\cap f^{r-1}V \subseteq \ldots K\cap fV \subseteq K.\] We start with a basis $x_{r,1},x_{r,2}, \ldots ,x_{r,n_{r}}$ of $K\cap f^{r}V$. Then we complete this 
by appropriate vectors  $x_{r-1,1},x_{r-1,2},\ldots ,x_{r-1,n_{r-1}}$ to a basis of $K\cap f^{r-1}V$ and so on. 
Here the case $n_{i}=0$ can occur for some indices. However  \[\mathcal{B'}=\{x_{i,j}|0\leq i\leq r,1\leq j \leq n_{i}\}\] is a basis of $K$. 
By construction there are vectors $y_{i,j}$ such that  $f^{i}y_{i,j}=x_{i,j}$ and we claim that \[\mathcal{B}=\{f^{l}y_{i,j}|0\leq i\leq r,1\leq j \leq n_{i}, 0\leq l \leq i\}\] 
is a basis of $V$. Before we prove this we illustrate the situation for $r=3$  by writing $\mathcal{B}$ into the following scheme similar to a staircase:
\vspace{0.1cm} 
\[\begin{array}{cccccccccccc}
y_{3,1}&.. &y_{3,n_{3}}&&&&&&&&&\\
fy_{3,1}&.. &fy_{3,n_{3}}&y_{2,1}&.. &y_{2,n_{2}}&&&&&&\\
f^{2}y_{3,1}&.. &f^{2}y_{3,n_{3}}&fy_{2,1}&..&fy_{2,n{2}}&y_{1,1}&..&y_{1,n_{1}}&&&\\
f^{3}y_{3,1}&..&f^{3}y_{3,n_{3}}&f^{2}y_{2,1}& ..&f^{2}y_{2,n_{2}}&fy_{1,1}&..&fy_{1,n_{1}}&y_{0,1}&..&y_{0n_{0}}\\
\end{array}\]\vspace{0.1cm}

Ordering the columns from the left to the right and the vectors in each column from the top to the bottom $f$ is represented by a JNF.

To see that $\mathcal{B}$ is linearly independent suppose $\sum \lambda_{l,i,j}f^{l}y_{i,j} = 0$. Applying $f^r$ to this equation we get $0=\sum \lambda_{l,i,j}f^{r+l}y_{i,j} = \sum \lambda_{0,r,j}x_{r,j},$ 
whence $\lambda_{0,r,j}=0$ for all  $j$. Next we apply  $f^{r-1}$ and we get  $0=\sum \lambda_{l,i,j}f^{l+r-1}y_{i,j} = \sum \lambda_{1,r,j}x_{r,j} + \sum \lambda_{0,r-1,j}x_{r-1,j} = 0$ 
 whence all  $\lambda_{1,r,j}$ and all  $\lambda_{0,r-1,j}$ are $0$. Continuing like that one finds that all coefficients $\lambda_{l,i,j}$ have to vanish.

To show that the span $W$ of $\mathcal{B}$ is $V$ we prove by induction on $i$ that $Ker f^i$ is contained in $W$ for $i=1,2,\ldots r+1$. This is clear for $i=1$.
In the inductive step from $i-1$ to $i$ take $v$ in $Ker f^i$. Then  $f^{i-1}v$ lies in $K\cap f^{i-1}V$, i.e. we have $f^{i-1}v= \sum_{k\geq i-1}\lambda_{kj}x_{kj}$ for some 
appropriate scalars $\lambda_{kj}$. Then $w=\sum_{k\geq i-1}\lambda_{kj}f^{k-i+1}y_{kj}$ lies in $W$ and we have $f^{i-1}v=f^{i-1}w$. Thus $v-w$ lies in $Ker f^{i-1}$ and $v=w+(v-w)$
 lies in $W$ by induction.
\end{proof}

We end this note with some remarks that are not all adressed to beginners.
\begin{itemize}
 \item The number $n_{i}$ occurring in the last proof is just the number of Jordan blocs of size $i+1$ and this implies the uniqueness of the JNF. 
\item The theorem of the RNF provides in particular a finite algorithm to decide whether two matrices $A$ and $B$ are similar or not. Furthermore the algorithm needs only rational operations
 with the entries of $A$ and $B$ and it is of polynomial complexity.  

The similarity problem for pairs of matrices is 'wild' and there is no method known to find a normal form using only 
rational operations. Belitzky's algorithm described in  \cite{Bel,Ser}   depends on the knowledge of the eigenvalues. Nevertheless in \cite{Bo2,Bo3}   we give  two different methods to
 decide  by rational
 operations whether two finite dimensional modules over any finitely generated algebra - e.g. two pairs of matrices -
 are isomorphic. The number of operations needed  grows only polynomially but very fast with the dimension, whence the algorithms are rather of theoretical interest.

\item For any partition $p=(p_{1},p_{2}, \ldots p_{r})$ of $n$ the set $S(p)$ of all matrices  $A$ having a RNF $R(A)=R(P_{1},P_{2},\ldots ,P_{r})$ such that the degree of $P_{i}$ is $p_{i}$ is
is a smooth rational locally closed $Gl_{n}$- invariant subvariety of $k^{n \times n}$ and the map $A\mapsto R(A)$ is a smooth morphism from $S(p)$ to affine $p_{1}$-space. All this 
is contained in \cite{Bo1} and it  is a  nice special case of the theory of sheets as described in \cite{Bor,Kra}.
\end{itemize}

\end{document}